\documentclass{amsart}
\usepackage{amssymb}
\usepackage{color}
\usepackage{float}
\usepackage[all,cmtip]{xy}
\usepackage{graphicx}
\usepackage{hyperref}
\usepackage{cancel}

\newcommand{\abs}[1]{\lvert#1\rvert}


\newcommand{\Acal}{{\mathcal{A}}}

\newcommand{\Ecal}{{\mathcal{E}}}

\newcommand{\Hcal}{{\mathcal{H}}}

\newcommand{\CC}{\mathbb{C}}

\newcommand{\FF}{\mathbb{F}}

\newcommand{\HH}{\mathbb{H}}

\newcommand{\NN}{\mathbb{N}}

\newcommand{\QQ}{\mathbb{Q}}
\newcommand{\RR}{\mathbb{R}}

\newcommand{\ZZ}{\mathbb{Z}}

\newcommand{\RFH}{\operatorname{RFH}}
\newcommand{\RFC}{\operatorname{RFC}}
\newcommand{\HM}{\operatorname{HM}}

\newcommand{\Crit}{\operatorname{Crit}}


\newtheorem{theorem}{Theorem}
\newtheorem*{theorem*}{Theorem}
\newtheorem{proposition}{Proposition}[section]
\newtheorem{lemma}[proposition]{Lemma}

\theoremstyle{definition}

\theoremstyle{remark}
\newtheorem{remark}[proposition]{Remark}

\numberwithin{equation}{section}

\hyphenation{mani-fold ge-ne-ra-li-zed diffeo-morphism}

\begin{document}

\title{The Bott--Samelson theorem for positive Legendrian isotopies}

\author{Lucas Dahinden}
\address{Universit\'e de Neuch\^atel (UNINE)}
\curraddr{Institut de Math\'ematiques, Rue Emile-Argand 2, 2000 Neuch\^atel}
\email{l.dahinden@gmail.com}
\subjclass[2010]{Primary 53D35; Secondary 53D40, 57R17}

\date{\today}

\begin{abstract}
	The classical Bott--Samelson theorem states that if on a Riemannian manifold all geodesics issuing from a certain point return to this point, then the universal cover of the manifold has the cohomology ring of a compact rank one symmetric space. This result on geodesic flows has been generalized to Reeb flows and partially to positive Legendrian isotopies by Frauenfelder--Labrousse--Schlenk. We prove the full theorem for positive Legendrian isotopies. 
\end{abstract}

\maketitle

\section{Introduction and result}\label{sec:intro}


The spherization $S^*Q$ of a manifold $Q$ is the space of positive line elements in the cotangent bundle $T^*Q$. The tautological one-form $\lambda$ on $T^*Q$ does not pass to the quotient, but its kernel does. This endows $S^*Q$ with a cooriented contact structure~$\xi$. 


Let $j_t:L\to S^*Q$ be a smooth family of embeddings such that $j_t(L)$ is a Legendrian submanifold of $S^*Q$ for all $t$. Then $L_t=j_t(L)$ is called a Legendrian isotopy. If  $\alpha(\frac d{dt} j_t(x))>0$ for one and hence any coorientation preserving contact form $\alpha$ for $\xi$ and all $x\in L$, then $L_t$ is called positive. Frauenfelder--Labrousse--Schlenk proved the following Theorem.

\begin{theorem} \cite[Theorem 2.13]{FLS}\label{BS1} Let $Q$ be a closed connected manifold of dimension $\geq 2$. Suppose there exists a positive Legendrian isotopy $L_t$ in the spherization $S^*Q$ that connects the fiber over a point with itself, i.e.\ $L_0=L_1=S^*_qQ$. Then the fundamental group of $Q$ is finite and the integral cohomology ring of the universal cover of $Q$ is generated by one element.
\end{theorem}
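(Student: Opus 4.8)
The plan is to convert the contact hypothesis into a statement about the Morse theory of the based loop space of $Q$ and then to invoke the classical topology of loop spaces of Bott--Samelson type. Fix an auxiliary Riemannian metric on $Q$ together with the contact form on $S^*Q$ whose Reeb flow is the cogeodesic flow. Under the resulting identifications, a Reeb chord from $S^*_qQ$ to $S^*_qQ$ of length $T$ is precisely a geodesic loop based at $q$ of length $T$, and the action filtration on such chords matches the length filtration (equivalently the $\sqrt{E}$-filtration) on the based loop space $\Omega_q Q$ of $W^{1,2}$-paths from $q$ to $q$; geodesic loops are exactly the critical points of the energy $E$. Thus every feature of the hypothesis will be encoded in the filtered Morse--Bott homology $H^{\le a}_*(\Omega_q Q;\ZZ)$.

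The core of the argument is a dynamical consequence of the hypothesis. Lift $L_t$ to a compactly supported contact isotopy $\{\phi_t\}_{t\in[0,1]}$ of $S^*Q$ with $\phi_0=\id$ and $\phi_1(S^*_qQ)=S^*_qQ$; positivity of $L_t$ means the contact Hamiltonian of $\phi_t$ is strictly positive along $L_t$. Iterate the resulting loop: for every $k$ the composition $\phi_1^{\,k}$ is again a positive Legendrian isotopy of the fiber to itself, and since it advances by a bounded positive amount of ``Reeb time'' per period, one can \emph{sandwich} it between the time-$ck$ and time-$Ck$ Reeb flows for fixed $0<c\le C$. Making this precise goes through the continuation maps of a filtered Floer-theoretic invariant of the pair $(S^*_qQ,S^*Q)$ — the Rabinowitz--Floer homology $\RFH(S^*_qQ,S^*Q)$ of the cotangent fibre, isomorphic to a Tate-type completion of $H_*(\Omega_q Q;\ZZ)$ via an Abbondandolo--Schwarz-type isomorphism: the map induced by $\phi_1^{\,k}$ is an isomorphism on total homology but shifts the action filtration upward by an amount in $[ck,Ck]$. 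Extracting the consequences of this family of action-shifting automorphisms is the point; it yields (a) that the set $\Lambda_q\subset(0,\infty)$ of lengths of geodesic loops based at $q$ has bounded gaps, and (b) that the tower $\{H^{\le a}_*(\Omega_q Q;\ZZ)\}_a$ is eventually periodic: there are $\tau>0$ and $r\ge 1$ so that for $a$ large, passing from filtration $a$ to $a+\tau$ adjoins one copy of $H_*(\Omega_q Q;\ZZ)$ shifted up in degree by $r$. This is exactly the homological fingerprint of the situation ``all geodesics through $q$ are closed of a common length'', the rigid hypothesis of the classical Bott--Samelson theorem, now obtained from the flexible positivity hypothesis.

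From (a) and (b) the argument is purely topological, paralleling Bott's (cf.\ Besse, \emph{Manifolds all of whose geodesics are closed}, Ch.~7). By (b) the Poincar\'e series of $H_*(\Omega_q Q;\QQ)$ has the lacunary form $P(t)/(1-t^{r})$ with $P$ a polynomial; in particular $\dim H_0(\Omega_q Q;\QQ)<\infty$, so $\pi_1(Q)=\pi_0(\Omega_q Q)$ is finite. Now pass to the universal cover $\widetilde Q$, which is compact: for $\dim Q\ge 3$ the cotangent sphere fibre is simply connected, so $\phi_t$ lifts to a positive Legendrian isotopy of $S^*_{\tilde q}\widetilde Q$ to itself (and $\dim Q=2$ is trivial, since then $\widetilde Q=S^2$). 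Repeating the analysis on $\widetilde Q$, the connected loop space $\Omega_{\tilde q}\widetilde Q$ has torsion-free homology with lacunary Poincar\'e series. Feeding this into the cohomology Serre spectral sequence of the fibration $\Omega_{\tilde q}\widetilde Q\to P_{\tilde q}\widetilde Q\to\widetilde Q$, whose total space is contractible, and solving for the base as in Bott's argument — the lacunarity of the fibre cohomology forcing a single pattern of differentials — one recovers that $H^*(\widetilde Q;\ZZ)$ is torsion-free and generated as a ring by a single homogeneous element. This is the assertion.

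The step I expect to be the main obstacle is the dynamical one, namely proving (a) and (b). Positivity is assumed only pointwise along the moving Legendrian $L_t$, not on all of $S^*Q$, so the comparison with the Reeb flow cannot be a $C^0$-estimate on contact Hamiltonians; it must be routed through monotonicity of a filtered Floer homology of the cotangent fibre — whose construction and identification with the loop space homology must first be carried out — using auxiliary isotopies interpolating between $\phi_1^{\,k}$ and the Reeb flow. Moreover the ring-generation conclusion requires the periodicity in (b) with \emph{integral} coefficients, not merely rationally or mod~$2$; controlling torsion in the filtered Morse complex of $\Omega_q Q$ under these continuation maps is the delicate part.
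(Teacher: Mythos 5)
The paper does not reprove this theorem — it cites \cite{FLS} and recalls the machinery (Lemmas~\ref{specialham}, \ref{peskydetails}, \ref{morsebott}, \ref{muzero}, \ref{merry}) that \cite{FLS} uses for it. Your proposal shares that framework — Rabinowitz--Floer homology of the cotangent fibre, Merry's identification $\RFH^{>0}_*\cong H_*(\Omega_q Q,q)$, Bott-style Serre spectral sequence at the end — but the heart of the argument is a genuinely different route, and it is there that the proposal has a gap.

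Where the paper, following \cite{FLS}, converts positivity into a homological statement is \emph{not} a sandwich of continuation maps. Instead, one extends the positive Legendrian isotopy to a twisted-periodic positive contact isotopy generated by a Hamiltonian $h^t$ chosen so that $\dot h^t=0$ along $L_t$ (Lemma~\ref{specialham}); this forces the Rabinowitz--Floer functional to be Morse--Bott with critical manifolds exactly $S^*_qQ\times\{k\}$ (Lemma~\ref{morsebott}, the technical core of this paper), and the Robbin--Salamon index is computed to grow \emph{linearly}, $\mu_{\rm RS}(\cdot,k)=k\mu_0$ with $\mu_0\geq 1$ (Lemma~\ref{muzero}). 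That produces an explicit chain complex with two generators per period $k$, in degrees $k\mu_0-(d-1)$ and $k\mu_0$, from which the lacunary form of $H_*(\Omega_qQ)$ is immediate. You instead assert that $\phi_1^k$ can be sandwiched between Reeb flows and that the resulting continuation maps are filtration-shifting isomorphisms, yielding your (a) and (b). This is the step you flag as the main obstacle, and indeed it is left entirely unproved; moreover, it is not clear that the sandwich gives (b) in the precise form you need (that each window of width $\tau$ contributes exactly one shifted copy of the total homology) — different chords shift by different amounts, the shift interval $[ck,Ck]$ is wide, and the image of a continuation map in filtered homology need not have the clean shape you claim. Pinning this down is exactly what the Morse--Bott analysis and the linear index growth do, and your route has no substitute for them. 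So the proposal identifies the right target but omits the mechanism.

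Two smaller issues. The passage to the universal cover is glossed over: the lift of the isotopy to $S^*\widetilde Q$ starting at $S^*_{\tilde q}\widetilde Q$ ends at $S^*_{\tilde q'}\widetilde Q$ for some $\tilde q'$ in the fibre over $q$ which need not equal $\tilde q$; one must iterate (a finite number of times, once $\pi_1(Q)$ is known finite) to close it up, and this should be said. And the integral-coefficient conclusion requires controlling torsion in the Morse complex; you flag this, but the paper/\cite{FLS} route handles it through the $\ZZ$-coefficient version of Merry's theorem together with the explicit index structure, whereas the sandwich argument offers no obvious handle on torsion.
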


	We note that by a deep result in algebraic topology, a manifold with integral cohomology ring generated by one element is homotopy equivalent to $S^n$, $\RR P^n$ or $\CC P^n$ or has the integral cohomology ring of $\HH P^n$ or the Cayley plane, see~\cite{Be78} and the references therein.

In this paper we prove the following addition to Theorem~\ref{BS1}, which was conjectured in~\cite{FLS}.

\begin{theorem}\label{main}
	Under the assumptions of Theorem~\ref{BS1}, if furthermore $L_t\cap L_{0}=\emptyset$ for $0<t<1$, then $Q$ is simply connected or homotopy equivalent to $\RR P^n$.
\end{theorem}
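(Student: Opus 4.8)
We may assume $G:=\pi_1(Q)$ is nontrivial, since otherwise $Q$ is simply connected and there is nothing to prove. By Theorem~\ref{BS1} the group $G$ is finite and the integral cohomology ring of the universal cover $\tilde Q$ is generated by one element; by the result quoted after Theorem~\ref{BS1} this forces $\tilde Q$ to be homotopy equivalent to $S^n$ or $\CC P^n$, or to have the integral cohomology ring of $\HH P^n$ or of the Cayley plane — the case $\RR P^n$ being excluded because $\tilde Q$ is simply connected. The contact manifold $S^*\tilde Q$ is the universal cover of $S^*Q$ with deck group $G$ acting freely, and the preimage of the fibre $S^*_qQ$ is the disjoint union $\bigsqcup_{h\in G}S^*_{h\tilde q}\tilde Q$ for a chosen lift $\tilde q$ of $q$. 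The goal is to show that in fact $\tilde Q\simeq S^n$ and $G=\ZZ/2$: a closed manifold with fundamental group $\ZZ/2$ and universal cover homotopy equivalent to $S^n$ is homotopy equivalent to $\RR P^n$, which then finishes the proof.

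The plan is to lift $L_t$ and extract a sharp first-return statement. Lift the positive Legendrian isotopy to a positive Legendrian isotopy $\tilde L_t$ in $S^*\tilde Q$ with $\tilde L_0=S^*_{\tilde q}\tilde Q$. Since $\tilde L_1$ covers $S^*_qQ$ there is a unique $g\in G$ with $\tilde L_1=S^*_{g\tilde q}\tilde Q$, and the hypothesis $L_t\cap L_0=\emptyset$ for $0<t<1$ lifts to
\[
\tilde L_t\cap S^*_{h\tilde q}\tilde Q=\emptyset\qquad\text{for all }h\in G\text{ and all }0<t<1.
\]
The analytic core is to feed $\tilde L_t$ into the Rabinowitz--Floer-homological machinery behind Theorem~\ref{BS1}: the positivity produces a Reeb chord of action exactly $1$ from $S^*_{\tilde q}\tilde Q$ to $S^*_{g\tilde q}\tilde Q$ — the ``once around'' chord traced by the isotopy — while the displayed disjointness forbids any Reeb chord of positive action $<1$ from $S^*_{\tilde q}\tilde Q$ to $\bigsqcup_{h\in G}S^*_{h\tilde q}\tilde Q$; in other words the filtered Rabinowitz--Floer homology of the moving fibre carries nothing below the normalising action level. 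In geodesic-flow terms this says that in $\tilde Q$ every geodesic issuing from $\tilde q$ first meets the orbit $G\tilde q$ at time $1$ and does so at the common point $g\tilde q$, with no earlier return: the point $\tilde q$ ``refocuses'' at $g\tilde q$. This is the positive-Legendrian-isotopy avatar of the classical uniform-first-return (Blaschke) condition.

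It remains to run the classical dichotomy. First $g\neq 1$: if $g=1$ then all geodesics from $\tilde q$ return to $\tilde q$ at time $1$, yet for any $h\in G\setminus\{1\}$ some geodesic from $\tilde q$ reaches $h\tilde q\neq\tilde q$ at a time strictly between $0$ and $1$ (a point of a compact rank one space is reached within half of the return time), contradicting the absence of a return to $G\tilde q$ before time $1$. With $g\neq 1$, \emph{every} geodesic from $\tilde q$ passes through the single point $g\tilde q\neq\tilde q$ at time $1$; among the spaces listed above only $S^n$ admits a point other than $\tilde q$ lying on every geodesic from $\tilde q$ — for $\CC P^n$ with $n\ge 2$, $\HH P^n$ and the Cayley plane the cut locus of a point is positive-dimensional — so $\tilde Q\simeq S^n$, the point $g\tilde q$ is the antipode of $\tilde q$, and $g^2$ fixes $\tilde q$, hence $g^2=1$ by freeness. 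Finally, if there were $h\in G$ with $h\tilde q\neq\pm\tilde q$ a minimising geodesic from $\tilde q$ to $h\tilde q$ would reach it at a time strictly less than $1$, again contradicting the no-earlier-return statement; by freeness this forces $G=\langle g\rangle=\ZZ/2$, so $Q=\tilde Q/G$ is homotopy equivalent to $\RR P^n$.

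The step I expect to be the main obstacle is the one in the second paragraph: deducing from the positivity together with the disjointness that the filtered Rabinowitz--Floer homology carries \emph{nothing} below the normalising action level, i.e.\ upgrading the growth estimate underlying Theorem~\ref{BS1} so as to control the \emph{bottom} of the action filtration. Once this sharp first-return information is in hand, the universal cover is forced to be a sphere and the deck group to be $\ZZ/2$ by the elementary cut-locus and distance considerations above, exactly as in the Riemannian Bott--Samelson theorem, cf.\ \cite{Be78}.
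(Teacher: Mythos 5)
Your opening paragraph and the intention of the last paragraph are fine, but there is a fundamental gap in the middle two paragraphs that the paper's actual proof avoids entirely: you translate the positive Legendrian isotopy into a geodesic-flow statement, and from that point on you argue with Riemannian geometry of compact rank one symmetric spaces (Blaschke-type return times, cut loci). This translation is not available. A positive Legendrian isotopy $L_t$ with $L_0 = L_1 = S^*_qQ$ does not come from the geodesic flow of any Riemannian metric, and even if $\widetilde Q$ has the homotopy type of $\CC P^n$ etc., one is free to choose an arbitrary metric on $Q$ when setting up the spherization; nothing forces the geodesics of that metric to have the uniform first-return behaviour you invoke, and nothing forces the cut locus to look like that of the symmetric metric. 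The entire point of Theorems~\ref{BS1} and~\ref{main} is that they hold without a geodesic hypothesis, so any argument that at some point says ``in geodesic-flow terms, every geodesic issuing from $\tilde q$ first meets $G\tilde q$ at time $1$'' has reintroduced exactly the assumption that is being dropped.

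The place where you flag your own uncertainty — ``deducing\ldots that the filtered Rabinowitz--Floer homology carries nothing below the normalising action level'' — is also not what is needed, and the paper does not prove such a vanishing. What the paper actually does is (i) choose the contact Hamiltonian $h^t$ so that $\dot h^t=0$ along $L_t$ (Lemma~\ref{specialham}), which makes the Rabinowitz-type functional behave at its critical points as in the autonomous case; (ii) prove the functional is Morse--Bott with critical components $S^*_qQ\times\{k\}$ (Lemma~\ref{morsebott}) — this is the missing piece that lets the rest of the argument from~\cite{FLS} go through; (iii) show the Robbin--Salamon index is $k\mu_0$ with $\mu_0\geq 1$ (Lemma~\ref{muzero}); and then (iv) compare the resulting $\RFH^{>0}_*$ with $\widetilde H_*(\Omega_qQ;\ZZ_2)$ to read off $\pi_1(Q)\in\{0,\ZZ_2\}$, invoke~\cite[Corollary 3.8]{FraSch06} to reduce to $Q\simeq\RR P^d$ or $\widetilde Q\simeq\CC P^{2n+1}$, and finally exclude $\CC P^{2n+1}$ by a degree count of generators together with a homotopy-class argument for chords — all purely Floer-theoretic, with no appeal to geodesic geometry of the model spaces. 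Your cut-locus dichotomy in the third paragraph therefore replaces the paper's index computation with an argument that simply does not apply in the setting of the theorem, and this is not a gap that can be patched; it is the wrong mechanism.
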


	The union of these two theorems is the complete generalization of the classical Bott--Samelson theorem from geodesic flows to positive Legendrian isotopies.


The first versions of the Bott--Samelson theorem were for geodesic flows and used Morse theory of the energy functional on the based loop space, see~\cite{Bo54},~\cite{Sa63} and~\cite{Be78}. Frauenfelder, Labrousse and Schlenk~\cite{FLS} proved versions of Theorem~\ref{BS1} and~\ref{main} for autonomous Reeb flows, using Rabinowitz--Floer homology. They also proved Theorem~\ref{BS1} using Rabinowitz--Floer homology for positive Legendrian isotopies as stated above. The puzzle piece missing in~\cite{FLS} to generalize Theorem~\ref{main} from autonomous Reeb flows to positive Legendrian isotopies is the fact that the action functional in the construction is Morse--Bott. We provide this in Lemma~\ref{morsebott}, and thus complete the proof in~\cite{FLS}. The key ingredient is the choice of Hamiltonian, which is elaborated in Lemma~\ref{specialham}. We cannot avoid the Hamiltonian to be time-dependent, but we can control the time-dependence along the Legendrian isotopy. At critical points, the resulting action functional then behaves like in the autonomous case. This paper is heavily based on~\cite{FLS}, which also contains an extensive introduction to the topic.

\subsection*{Acknowledgements}
	I wish to thank Felix Schlenk and the anonymous referee for their valuable suggestions. This work is supported by SNF grant 200021-163419/1.

\section{Recollections}\label{sec:tce}

The Rabinowitz--Floer homology we use depends on a time-dependent Reeb flow, not on a Legendrian isotopy. We first explain how we choose such a flow that restricts to a given Legendrian isotopy. Then we briefly present the version of Rabinowitz--Floer homology we use and discuss its properties. We only sketch the proofs, since they are contained in or  are analguous to proofs in~\cite{AF12,CF09,CFO,FLS}. For a general exposition of Morse--Bott homology we refer the reader to the Appendix of~\cite{Fra04}.


\subsubsection*{The choice of flow} Let $j_t:L\hookrightarrow M,\; {t\in[0,1]}$, be a positive Legendrian isotopy in a cooriented exact contact manifold $(M,\alpha)$. We denote $L_t=j_t(L)$. By the Legendrian isotopy extension theorem, see for example~\cite[Theorem 2.6.2]{Ge08}, there exists a positive contact isotopy~$\psi^t$ of $M$ such that $\psi^t(L_0)=L_t$. If furthermore $L_0=L_1$, then there exists a positive and twisted periodic (that is $\varphi^t=\varphi^{t-k}\circ\varphi^k$ for all $t\in\RR, k\in\ZZ$) contact isotopy $\varphi^t$ such that $\varphi^k(L_0)=L_0$ for all $k\in\NN$, see~\cite[Proposition 6.2]{FLS}. This isotopy is generated by a contact Hamiltonian $h^t$ that is a convex combination of the contact Hamiltonian of $\psi^t$ and the one of the Reeb flow $\psi^t_R$ (namely $h\equiv 1$), such that for $t$ near $0$ or $1$, $\varphi^t$ coincides with $\psi_R^t$. Note that in general $\varphi^t(L_0)\neq \psi^t(L_0)$ for $t\notin\NN$.


\begin{lemma}\label{specialham}
	Given a periodic Legendrian isotopy $L_t$ that is the restriction of the Reeb flow generated by the contact Hamiltonian $h\equiv 1$ for $t$ near $0$ and $1$ (as given by~\cite[Proposition 6.2]{FLS}), then the corresponding twisted periodic positive contact isotopy $\varphi^t$ can be chosen such that the time-dependent contact Hamiltonian $h^t$ that generates $\varphi^t$ satisfies $\dot h^t=0$ along $L_t$.
\end{lemma}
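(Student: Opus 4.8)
The plan is to start from the twisted periodic contact isotopy $\varphi^t$ provided by \cite[Proposition 6.2]{FLS}, whose generating contact Hamiltonian $h^t$ already agrees with the Reeb Hamiltonian $h\equiv 1$ for $t$ near the integers, and to modify it by a loop of contactomorphisms supported away from those integers so that the new Hamiltonian is stationary (in $t$) when restricted to the moving Legendrian $L_t$. The point is that along the Legendrian isotopy the value of the contact Hamiltonian is essentially forced: if $g^t$ is a contact isotopy with $g^t(L_0)=L_t$ and generating Hamiltonian $k^t=\alpha(\dot g^t\circ (g^t)^{-1})$, then $k^t\circ g^t|_{L_0}=\alpha(\tfrac{d}{dt}j_t)$, which is a fixed positive function on $L$ depending only on the given isotopy and not on the extension. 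So the quantity $h^t|_{L_t}$, pulled back to $L$, is independent of the choice of extending contact isotopy; the freedom is in how $h^t$ is extended off $L_t$ into $M$.

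The key step is therefore to reparametrize/adjust the extension. First I would observe that $h^t|_{L_t}$, as a function on $L$, is $t$-independent if and only if $\dot h^t=0$ along $L_t$ in the sense of the statement (here $\dot h^t$ means $\frac{d}{dt}h^t$ evaluated at the point $j_t(x)$, i.e.\ the ``partial'' time derivative, not the total derivative along the trajectory). Since $\alpha(\tfrac{d}{dt}j_t)>0$ is just some smooth positive function $f$ on $L\times[0,1]$, I would replace the Legendrian isotopy by an orientation-preserving time reparametrization so that, after the reparametrization, $\alpha(\tfrac{d}{dt}j_t)$ is a function of $x\in L$ alone near each $t$ — but this is not quite allowed because the endpoints $L_0=L_1=S^*_qQ$ and the ``Reeb near the integers'' normalization must be preserved, so instead I would only need $h^t|_{L_t}$ to be $t$-independent, which is weaker than full reparametrization. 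The cleanest route: take $\varphi^t$ from \cite[Proposition 6.2]{FLS}, set $c^t := h^t|_{L_t}\in C^\infty(L)$, and conjugate $\varphi^t$ by a suitable $t$-family of contactomorphisms $\sigma^t$ with $\sigma^t|_{L_0}=\id$ and $\sigma^0=\sigma^1=\id$, $\sigma^t=\id$ near the integers; conjugation $\varphi^t\mapsto \sigma^t\circ\varphi^t\circ(\sigma^t)^{-1}$ changes the generating Hamiltonian in a controlled way, and because $\sigma^t$ fixes $L_0$ pointwise it does not change $c^t$ — so conjugation alone cannot do the job. Hence the correct move is to precompose with a loop in the \emph{base} time variable: replace $j_t$ by $j_{\rho(t)}$ for a diffeomorphism $\rho$ of $[0,1]$ fixing endpoints; this multiplies $c^t$ by $\dot\rho(t)$, and choosing $\rho$ so that $\dot\rho(t)\,c^t(x)$ is independent of $t$ is exactly possible precisely when $c^t(x)$ has $t$-independent ``shape'', i.e.\ $c^t=\dot\rho(t)^{-1} c$ for some fixed $c$ — which is generally false. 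So the genuine content is that one must be allowed to change the \emph{extension} $h^t$ off $L_t$, and simultaneously absorb the mismatch: I would build $h^t$ by first prescribing it to be $t$-independent on a neighborhood of $L_t$ (using a tubular neighborhood theorem for the moving Legendrian, i.e.\ a $t$-family of contact embeddings of a fixed model neighborhood $U\times L$ of $L_0$ onto neighborhoods of $L_t$), declaring $h^t$ on that neighborhood to be the pullback of a fixed function, and then extending arbitrarily (convex-combining with $h\equiv 1$) to all of $M$ while keeping positivity and the behavior near the integers. The resulting $\varphi^t$ restricts to the given $L_t$ by construction and has $\dot h^t=0$ along $L_t$.

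The main obstacle is making the ``$t$-family of tubular neighborhoods'' compatible with all the normalizations at once: the neighborhood charts must be chosen so that the pushforward of the Reeb-flow Hamiltonian $h\equiv 1$ is again $t$-independent near $t\in\ZZ$ (so that the construction glues with the prescribed behavior there), and so that the twisted periodicity $\varphi^t=\varphi^{t-k}\circ\varphi^k$ survives; this requires the charts to be periodic in the same twisted sense, which one arranges by using $\varphi^k$ to transport the chart at time $t$ to the chart at time $t+k$. The positivity condition $h^t>0$ and the convex-combination recipe from \cite[Proposition 6.2]{FLS} then go through verbatim since they are open conditions and the modification is a small perturbation supported in a tube around $L_t$. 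I expect the only delicate point to be checking that after prescribing $h^t$ on the tube one can still extend it to a globally defined contact Hamiltonian whose flow is honestly defined for all $t$ and still satisfies $\varphi^k(L_0)=L_0$; this follows from completeness of $M$ in the relevant cases (spherizations of closed manifolds) together with the cutoff construction, so no new analytic input beyond \cite{FLS} is needed.
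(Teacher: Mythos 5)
Your opening reduction is correct and is the right starting observation: the restriction $h^t|_{L_t}$, pulled back to $L$, is forced to equal $\alpha(\frac{d}{dt}j_t)$, and the only freedom lies in how $h^t$ is extended off $L_t$. But the sentence ``$h^t|_{L_t}$, as a function on $L$, is $t$-independent if and only if $\dot h^t=0$ along $L_t$'' is false, and this error derails the rest of the argument. By the chain rule,
\[
\frac{d}{dt}\bigl(h^t(j_t(x))\bigr)\;=\;\dot h^t(j_t(x))\;+\;dh^t\Bigl(\tfrac{d}{dt}j_t(x)\Bigr),
\]
so the two conditions differ by the directional derivative $dh^t(\frac{d}{dt}j_t(x))$, which is nonzero in general because $\frac{d}{dt}j_t(x)$ is transverse to $L_t$ (it is positively transverse to $\xi$). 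The forced left-hand side $\frac{d}{dt}\alpha(\frac{d}{dt}j_t(x))$ is generally nonzero, so you can \emph{not} make $h^t|_{L_t}$ $t$-independent as a function on $L$. Consequently your final construction --- prescribing $h^t$ on a moving tube to be the pullback of a fixed function --- is inconsistent with the forced boundary value $h^t(j_t(x))=\alpha(\frac{d}{dt}j_t(x))$, and even where it can be arranged, it controls the wrong derivative (the derivative of the pullback through the chart, not the partial derivative $\dot h^t$ at a fixed spatial point).

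The idea you were reaching for but did not quite get to is this: along $L_t$ the value $h^t$ and the differential $dh^t|_{\xi}$ are forced (they determine $X_{h^t}$ on $L_t$), but $dh^t(R_\alpha)$ is \emph{free} normal-derivative data, because $R_\alpha$ is transverse to $L_t$. Geiges's standard extension lemma chooses $dh^t(R_\alpha)=0$; the paper instead imposes $dh^t(\frac{d}{dt}j_t(x))=\frac{d}{dt}(h^t(j_t(x)))$, which is a choice of $dh^t(R_\alpha)$ since the $\xi$-part of $\frac{d}{dt}j_t(x)$ is already pinned down. Plugging this into the chain-rule identity above forces $\dot h^t=0$ along $L_t$; one then extends $h^t$ linearly on the fibers of a tubular neighborhood and cuts off to $1$. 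Your proposal never identifies this free Reeb-direction jet, so the gap is a missing idea rather than a repairable detail.
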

\begin{proof}
	The construction of $h^t$ is performed as in~\cite[Theorem~2.6.2]{Ge08}. We emphasize for a function $h^t$ and a path $\gamma(t)$ the distinction between $(\frac d{dt} h^t)(\gamma(t))$ and $\frac d{dt}(h^t(\gamma(t)))$ by using the notation $\dot h^t:=\frac d{dt} h^t$.
	
	Recall that a contact Hamiltonian $h^t$ and a contact vector field $X_t$ determine each other through the equations $h^t=\alpha(X_t)$ and $\iota_{X_t}d\alpha=dh^t(R_\alpha)\alpha - dh^t$. We define the 1-jet of $h^t$ along $L_t$  as follows. 
	\begin{eqnarray}
		h^t(j_t(x))&=&\alpha\left(\frac d{dt}j_t(x)\right)\quad\forall x\in L,\label{a1}\\
		dh^t(v)&=&-\iota_{\frac d{dt}j_t(x)}d\alpha(v)\quad \forall v\in\xi\vert_{L_t},\label{a2}\\
		dh^t\left(\frac d{dt}j_t(x)\right)&=&\frac d{dt}\left(h^t(j_t(x))\right)\quad\forall x\in L.\label{a3}
	\end{eqnarray}
	Any Hamiltonian $h^t$ that satisfies the first two equations generates a vector field $X_t$ such that $X_t(j_t(x))=\frac d{dt}j_t(x)$ for all $x\in L$. Equation~(\ref{a2}) holds for all $v\in TL_t$ since $TL_t\subseteq\xi|_{L_t}$. Equation~(\ref{a3}) does not contradict~(\ref{a2}) since $\frac d{dt}j_t(x)$ is positively transverse to $\xi$ for all $x\in L$. (Here we differ from~\cite{Ge08} where the choice in~(\ref{a3}) is $dh^t(R_\alpha)=0$.) Since $\frac d{dt}\left(h^t(j_t(x))\right)=\dot h^{t}(j_{t}(x))+dh^t(\frac d{dt} j_t(x))$ for all $x\in L$, equation~(\ref{a3}) implies $\dot h^t=0$ along $L_t$. We extend $h^t$ to a neighbourhood of $L_t$ by identifying a neighbourhood of $L_t$ with the normal bundle $NL_t\to L_t$ and choosing $h_t$ linear on each fiber.
	
Finally we extend $h^t$ to a positive function that is constant 1 outside a neighbourhood of $L_t$. Since the Legendrian isotopy is the restriction of the Reeb flow generated by $h\equiv 1$ for $t$ near $0$ and $1$, the function $h^t$ thus constructed satisfies $h^t\equiv 1$ for $t$ near $0$ and $1$, and admits a 1-periodic extension.
\end{proof}                                                                                                   


The spherization $(S^*Q,\xi)$ of a manifold $Q$ is represented by any fiberwise starshaped hypersurface $\Sigma\subset T^*Q$ in the cotangent bundle with contact structure $\ker\lambda|_\Sigma$. The map that sends a positive line element to its intersection with~$\Sigma$ is a contactormorphism. The radial dilation of a fiberwise starshaped hypersurface by a positive function is a contactomorphism onto its image. Every cooriented contact form of $(S^*Q,\xi)$ is realized as $\lambda|_\Sigma$ for some fiberwise starshaped hypersurface $\Sigma$. We choose a Riemannian metric $g$ on $Q$ and represent $S^*Q$ henceforth as the unit cosphere bundle with respect to this metric.
	With $\alpha=\lambda|_{\Sigma}$, the symplectization $(\Sigma\times \RR_{>0},d(r\alpha))$ is naturally symplectomorphic to $T^*Q\backslash Q$. A contact isotopy~$\varphi^t_\Sigma$ of $\Sigma$ admits a lift to a Hamiltonian isotopy~$\varphi^t$ of $\Sigma\times\RR_{>0}$, defined by~$\varphi^t(x,r)=(\varphi^t_\Sigma(x),\frac r{\rho_t(x)})$ where $\rho_t(x)$ is defined by~$(\varphi^t_\Sigma)^*(\alpha)|_x=\rho_t(x)\alpha|_x$, see~\cite[Proposition 2.3]{AF12}. If $\varphi^t_\Sigma$ is generated by the contact Hamiltonian $h^t$, then~$\varphi^t$ is generated by the Hamiltonian $H^t=rh^t$.
	
	
\subsubsection*{The functional} Let $h^t$ be a positive, periodic contact Hamiltonian on $(\Sigma,\ker\lambda)$. Following~\cite{AF12} we choose a lift of the contact isotopy $\varphi^t$ of $\Sigma$ generated by $h^t$ to the symplectization $(\Sigma\times \RR_{>0},d(r\alpha))$, depending on parameters $\kappa\geq 2, R\geq2$ and constants $c,C$ such that uniformly $0<c<h^t<C$. We define $\widetilde H^t=rh^t-\kappa$. The Hamiltonian $H^t$ is a deformation of $\widetilde H^t$ such that $H^t=cr-\kappa$ for $r\leq1$, $H^t=\widetilde H^t$ for $2\leq r\leq \kappa R-1$ and $H^t=Cr-\kappa$ for $r>\kappa R$. This has the effect that $H^t$ induces reparametrized $g$-geodesic flows for $r\in(0,1]\cup[\kappa R,\infty)$, and a lift of the $h^t$-contact flow for $r\in[2,\kappa R-1]$.
	
Denote by $\Omega_{T_q^*Q}T^*Q$ the set of $W^{1,2}$ paths $x:[0,1]\to T^*Q$ such that $x(0),x(1)\in T^*_qQ$. Define the functional $\Acal:\Omega_{T_q^*Q}T^*Q\times\RR\to\RR$ by
\begin{eqnarray*}
	\Acal(x,\eta)&=&\frac1\kappa\left(\int_0^1 \left[\lambda(\dot x) -\eta H^{\eta t}(x(t))\right]\; dt\right).
\end{eqnarray*}
This functional depends of course on $h^t$, but also on the parameters $\kappa,R$ and the constants $c,C$. A pair $(x,\eta)$ is a critical point of $\Acal$ if and only if $\dot x=\eta X_{H^{\eta t}}$ and $\int_0^1 H^{\eta t}(x(t))+\eta t\dot H^{\eta t}(x(t))\;dt=0$. This is equivalent to 
\begin{equation}\label{crit1}
\left\{
\begin{aligned}
		\dot x &= \eta X_{H^{\eta t}},\\
		H^{\eta}(x(1))&=0,
\end{aligned}
\right.
\end{equation}
 as one sees by using that $\eta X_{H^{\eta t}}$-chords satisfy $\frac d{dt}H^{\eta t}(x(t))=\eta\dot H^{\eta t}(x(t))$ and by integration by parts. Note that the factor $\frac 1\kappa$ does not change the critical point equations~(\ref{crit1}), but only the critical values. In fact, Lemma~\ref{peskydetails} below shows that this factor normalizes the action for critical points in such a way that the action spectrum is independent of $\kappa$.
\begin{remark}
	For autonomous Hamiltonians $H^t=H$ the second equation of~(\ref{crit1}) becomes $H(x(t))=0\;\forall\; t\in[0,1]$. Thus the critical points are flow lines on the hypersurface $H^{-1}(0)$. This hypersurface is not well-defined for time-dependent Reeb flows and $H^{\eta t}(x(t))$ might be very large for $t\neq 1$. We deal with this defect through the parameters $\kappa,R$. Intuitively speaking, the parameters create safe space ($\kappa$ towards the zero section, and $R$ towards infinity), where critical orbits are free to roam. This is made precise in the next lemma from~\cite[Proposition 4.3, Corollary 4.4]{AF12}.
\end{remark}

\begin{lemma}\label{peskydetails}
	For all $a<b$ there exist constants $\kappa_0\geq2, R_0\geq2$ such that for $\kappa\geq\kappa_0$ and $R\geq R_0$, all critical points $(x,\eta)$ of $\Acal$ with action between $a$ and $b$ satisfy $2\leq\abs{x(t)}_g\leq\kappa R-1$ for all $t$ and $\Acal(x,\eta)=\eta$. As a consequence, the critical point equation~(\ref{crit1}) and the action values are independent of the choice of $\kappa\geq \kappa_0,R\geq R_0,c,C$.
\end{lemma}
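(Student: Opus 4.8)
This is \cite[Proposition~4.3 and Corollary~4.4]{AF12}, and I would run their argument. The role of the prefactor $\frac1\kappa$ is to normalise $\Acal$ so that $\Acal(x,\eta)=\eta$ at critical points, and the role of the parameters $\kappa,R$ is to force the chord $x$ into the radial annulus $\{2\le\abs{x}_g\le\kappa R-1\}$, on which $H^{t}=\widetilde H^{t}=rh^{t}-\kappa$ and every auxiliary choice becomes invisible. Two facts drive everything. First, at a critical point $\dot x=\eta X_{H^{\eta t}}$, and since the Liouville form of $d(r\alpha)$ is $\iota_{r\partial_r}d(r\alpha)=r\alpha=\lambda$ we get $\lambda(\dot x)=\eta\,(r\partial_r)(H^{\eta t})(x(t))$; on every affine piece $H^{t}=ar-\kappa$ of the Hamiltonian --- in particular on the central piece $rh^{t}-\kappa$ --- one has $(r\partial_r)(H^{t})-H^{t}\equiv\kappa$, so there $\lambda(\dot x)-\eta H^{\eta t}(x)=\eta\kappa$, and therefore $\Acal(x,\eta)=\eta$ as soon as the chord stays in the annulus. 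Second, the critical-point equation also imposes $H^{\eta}(x(1))=0$; since for $\kappa>2C$, $R>2/c$ one checks directly that $H^{\eta}<0$ on $\{\abs{x}_g\le 2\}$ and $H^{\eta}>0$ on $\{\abs{x}_g\ge\kappa R-1\}$, the endpoint $x(1)$ is forced into the annulus, with $\abs{x(1)}_g=\kappa/h^{\eta}(x(1))\in(\kappa/C,\kappa/c)$.

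The heart of the proof is to confine the whole chord, not just its endpoint. On $\{\abs{x}_g\le1\}$ and on $\{\abs{x}_g\ge\kappa R\}$ the Hamiltonian $H^{t}$ is autonomous and induces a reparametrised geodesic flow, which keeps $\abs{x}_g$ constant; a chord entering either region would be pinned there at a constant radius and would make $H^{\eta}(x(1))$ either strictly negative (near the zero section) or strictly positive (near infinity), contradicting $H^{\eta}(x(1))=0$. So every critical chord already satisfies $1<\abs{x(t)}_g<\kappa R$, with no hypothesis on the action. Feeding this back into the first paragraph bounds $\eta$ in terms of $a,b$, and with $\eta$ bounded the symplectisation lift $X_{rh^{\eta t}}$ that governs the dynamics on the annulus distorts $\log\abs{x}_g$ at a rate bounded independently of $\kappa,R$; since $\abs{x(1)}_g$ is of order $\kappa$, the radius therefore stays within a fixed multiplicative factor of $\kappa$ along the whole chord --- in an interval $(\delta\kappa,M\kappa)$ with $\delta,M$ independent of $\kappa,R$. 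A continuity/bootstrap argument on the largest subinterval of $[0,1]$ containing $1$ on which $\abs{x}_g\in(2,\kappa R-1)$ then forces that subinterval to be all of $[0,1]$, once $\kappa\ge\kappa_0$ and $R\ge R_0$ are chosen to beat $\delta$ and $M$.

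Once the chord is confined to $\{2\le\abs{x}_g\le\kappa R-1\}$ we have $H^{\eta t}(x(t))=\widetilde H^{\eta t}(x(t))$ for all $t$, hence $\Acal(x,\eta)=\eta$. On that annulus $X_{H^{\eta t}}=X_{rh^{\eta t}}$ contains none of $c,C,\kappa,R$, and these parameters enter the critical-point equations only through the normalisation $\abs{x(1)}_g\,h^{\eta}(x(1))=\kappa$, which just rescales the chord in the radial direction; hence the critical-point equation~(\ref{crit1}) and the set of attained action values (which equals the set of attained $\eta$) are the same for all $\kappa\ge\kappa_0$, $R\ge R_0$ and all $c,C$ with $0<c<h^{t}<C$. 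The step I expect to be the real obstacle is the confinement in the second paragraph: the logical interplay between bounding $\eta$ --- which one wants in order to control the dynamics inside the annulus --- and controlling the chord on the collars, where $H^{t}$ is not yet known to be undeformed, is exactly the delicate part, and making the constants $\kappa_0,R_0$ close the argument is the content of \cite[Proposition~4.3]{AF12}.
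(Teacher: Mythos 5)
The paper offers no proof of this lemma: it simply cites~\cite[Proposition 4.3, Corollary 4.4]{AF12}, and your proposal opens with exactly that citation, so you have taken the same route. Your additional sketch---reading $\Acal(x,\eta)=\eta$ off the $1$-homogeneity of $rh^{t}$ on the central annulus, using $H^{\eta}(x(1))=0$ to place the endpoint, using the radius-preserving geodesic flow on the two collars for the crude confinement, and a bootstrap once $\eta$ is bounded---is a fair reconstruction of the Albers--Frauenfelder argument, and you correctly flag the confinement/bootstrap as the delicate step.
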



\subsubsection*{The chain group} Assume from now on that the functional~$\Acal$ is Morse--Bott for critical points with action between $a$ and $b$. Choose in addition a Morse function $f$ on $\Crit\Acal$. Then for $b\in\RR$ we define the filtered Rabinowitz--Floer chain group $\RFC^b(\Acal)$ as the $\ZZ_2$-vector space generated by the critical points of $f$ on $\Crit\Acal$ with action~$\leq b$.


\subsubsection*{The index} The index of a critical point $c=(x,\eta)$ of $f$ on $\Crit\Acal$ is defined as follows. Let $TT^*_qQ$ be the vertical Lagrangian distribution. Denote by $\mu_{RS}(x,\eta)$ the Robbin--Salamon index of the path $d(\varphi^{\eta t})^{-1}(TT^*_{x(t)}Q)$ with respect to $TT^*_{x(0)}Q$, and by $\mu_{M}$ the Morse index of $f$ on $\Crit\Acal$, see~\cite{RoSa93}. Then the index of $c$ is defined as 
$$\mu(c)=\mu_{RS}(x,\eta)-\frac{n-1}2+\frac12\mu_{M}(c),$$
where the shift by $-\frac{n-1}2$ is introduced such that the index $\mu$ agrees with the Morse index for geodesic Hamiltonians. Denote by $\RFC_*^{>0}(\Acal)$ the chain groups graded by the index $\mu$. 


\subsubsection*{The differential} For the differential, we choose an $\omega$-compatible almost complex structure~$J=J_{t,\eta}$ on~$T^*Q$ that satisfies the following properties for $r \in[0,1]\cup[\kappa R,\infty)$, following~\cite[Chapter 3]{CFO}:
\begin{itemize}
	\item $J$ is independent of~$t,\eta$,
	\item $J$ maps $r\partial_r$ to $X_{\frac 12r^2}$ and preserves $\ker\lambda|_{\{r=const\}}$,
	\item $J$ is invariant under the Liouville flow $(y,r)\mapsto(y,e^{t}r),\;t\in\RR$.
\end{itemize} 
Define the $L^2$-metric
\begin{equation}
\langle (v_1,\eta_1),(v_2,\eta_2)\rangle_{J}=\frac1\kappa\int_0^1\omega (v_1,Jv_2)\;dt+\frac{\eta_1\eta_2}\kappa\nonumber
\end{equation}
on $\Omega_{T_q^*Q}T^*Q\times\RR$. Further, choose a Morse--Smale metric $m$ on $\Crit\Acal$. The differential of degree $-1$ is defined by the $\ZZ_2$-count of finite energy negative gradient flow lines with cascades. A flow line with cascades starts at a critical point of $f$ at time $-\infty$, then runs until a finite time as negative $m$-gradient flow line on $\Crit\Acal$, then runs as negative $\langle\cdot,\cdot\rangle_{J}$-flow line from one component of $\Crit\Acal$ to an other (from time $-\infty$ to $+\infty$), then runs for a finite time along a negative $m$-gradient flow line, $\ldots$, and after finitely many such changes (cascades) ends in a critical point of $f$ at time $+\infty$. To show that this differential is well defined and $d^2=0$, one has to show that for $\Acal(c^+),\Acal(c^-)\in[a,b]$ the moduli space of finite energy negative gradient flow lines with cascades from $c^+$ to $c^-$ is compact modulo breaking. This follows from standard arguments as soon as one has established $L^{\infty}$ bounds on the Floer strips underlying the $\langle\cdot,\cdot\rangle_{J}$-parts of the flow lines, on the derivatives of the Floer strips, and on $\eta$. The $L^\infty$ bounds on the Floer strips follow from a maximum principle since our Hamiltonian is convex for $r\notin[1,\kappa R]$. The $L^\infty$ bounds on the derivatives follow from the exactness of $\omega=d\lambda$ that prevents bubbling. The following lemma shows that for almost critical points, $\eta$ is bounded by the action.
\begin{lemma}[Fundamental Lemma]
	There exists $\varepsilon>0$ such that
	\begin{equation*}
		\|\nabla\Acal(x,\eta)\|<\varepsilon\Rightarrow \abs{\eta}\leq \frac1\varepsilon(\Acal(x,\eta)+1).\nonumber
	\end{equation*}
\end{lemma}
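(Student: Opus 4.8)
The plan is to argue by contradiction, following the pattern of the Fundamental Lemma in \cite{AF12,CF09}. If the implication fails, then for $\varepsilon=1/\nu$ we obtain pairs $(x_\nu,\eta_\nu)\in\Omega_{T_q^*Q}T^*Q\times\RR$ with $\|\nabla\Acal(x_\nu,\eta_\nu)\|\to0$ but $|\eta_\nu|/(\Acal(x_\nu,\eta_\nu)+1)\to\infty$; in particular $|\eta_\nu|\to\infty$, and after passing to a subsequence we may assume $\eta_\nu\to+\infty$, the case $\eta_\nu\to-\infty$ being analogous. Computing $d\Acal$ and integrating by parts --- the boundary contributions vanish because $\lambda$ restricts to $0$ on $T^*_qQ$ --- identifies the $\Omega$-component of $\nabla\Acal$ with $J(\dot x-\eta X_{H^{\eta t}})$ and its $\RR$-component with a fixed nonzero multiple of $\int_0^1[H^{\eta t}(x(t))+\eta t\,\dot H^{\eta t}(x(t))]\,dt$. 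Hence, writing $\xi_\nu:=\dot x_\nu-\eta_\nu X_{H^{\eta_\nu t}}(x_\nu)$, the hypothesis yields
\[
\|\xi_\nu\|_{L^2}\to0\qquad\text{and}\qquad\int_0^1\big[H^{\eta_\nu t}(x_\nu)+\eta_\nu t\,\dot H^{\eta_\nu t}(x_\nu)\big]\,dt\to0 .
\]

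I would then exploit the identity
\[
\kappa\,\Acal(x_\nu,\eta_\nu)=\int_0^1\lambda(\xi_\nu)\,dt+\eta_\nu\int_0^1\big[\lambda\big(X_{H^{\eta_\nu t}}(x_\nu)\big)-H^{\eta_\nu t}(x_\nu)\big]\,dt,
\]
which follows by splitting $\lambda(\dot x_\nu)=\lambda(\xi_\nu)+\eta_\nu\lambda(X_{H^{\eta_\nu t}})$ in the definition of $\Acal$. On the inner and outer geodesic regions $\{|x|\le1\}$, $\{|x|\ge\kappa R\}$ and on the Reeb region $\{2\le|x|\le\kappa R-1\}$ the Hamiltonian $H^t$ is linear, respectively positively homogeneous of degree one, in the radial variable, so $dH^t(r\partial_r)=H^t+\kappa$ there and the second integrand equals the constant $\kappa$; moreover $\dot H^t\equiv0$ on the two geodesic regions, so the whole time-dependence of $H^t$ is supported over the compact Reeb region, where $\dot h^t$ is uniformly bounded and vanishes along $L_t$ by Lemma~\ref{specialham}. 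Granting that the contribution of the two compact deformation shells $\{1\le|x|\le2\}$ and $\{\kappa R-1\le|x|\le\kappa R\}$ to the second integral, as well as $\int_0^1\lambda(\xi_\nu)\,dt$, is $o(\eta_\nu)$, the identity gives $\Acal(x_\nu,\eta_\nu)=\eta_\nu+o(\eta_\nu)$, which contradicts $|\eta_\nu|/(\Acal(x_\nu,\eta_\nu)+1)\to\infty$ and proves the lemma.

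I expect the main obstacle to be exactly the estimate just granted, and I would handle it as the analogous step is handled in \cite{AF12,CF09}, by a maximum principle resting on the convexity of $H^t$ in the radial variable outside $\{1\le|x|\le\kappa R\}$: the flow of $H^t$ on the radial regions preserves $|x|$, so the smallness of $\xi_\nu$ in $L^1$ forces $|x_\nu|$ to stay in a bounded range (it cannot leave through the convex ends once $\|\xi_\nu\|$ is small), and the same smallness prevents $x_\nu$ from re-entering a deformation shell once it has left it; since $|X_{H^{\eta_\nu t}}|$ is of order $\eta_\nu$ on these compact shells, each is traversed in time $O(1/\eta_\nu)$, so the $\eta_\nu$-weighted time spent in the shells is $O(1)$. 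With $|x_\nu|$ thus confined, $\int_0^1\lambda(\xi_\nu)\,dt\le\|\lambda\|_{C^0}\,\|\xi_\nu\|_{L^1}\to0$. The only point not already contained in the autonomous arguments of \cite{AF12,CF09} is that the extra term $\eta_\nu t\,\dot H^{\eta_\nu t}(x_\nu)$ must not disturb these estimates; by the second paragraph it is supported over the compact Reeb region and is dominated by the uniform bound on $\dot h^t$, so the time-dependent Reeb estimates of \cite{AF12} apply without change.
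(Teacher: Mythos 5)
The paper does not give a self-contained proof of this lemma; it delegates to \cite[Lemma 4.5]{AF12} and \cite[Proposition 3.1]{CF09}, which run a \emph{direct} estimate rather than an argument by contradiction. Your reconstruction is built around the same two ingredients as those references, namely the splitting
$\kappa\Acal(x,\eta)=\int_0^1\lambda(\xi)\,dt+\eta\int_0^1\bigl[\lambda(X_{H^{\eta t}})-H^{\eta t}\bigr]\,dt$
and the identity $\lambda(X_{H^t})-H^t=\kappa$ on the three pieces where $H^t$ is affine/one-homogeneous in $r$; the integration by parts and the observation that $\lambda$ vanishes on the fibers $T^*_qQ$ are also correct. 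So the computations are on target; the difficulties are in how you close the estimate.

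The load-bearing claim that each deformation shell ``is traversed in time $O(1/\eta_\nu)$'' is not justified and, as stated, is likely false: on the shells the interpolating Hamiltonian depends on the contact variable as well as on $r$, so $X_{H^{\eta t}}$ generically has an angular component, and a near-critical orbit can remain in a shell for a macroscopic time. Fortunately you do not need this. The approach used in \cite{CF09,AF12} is to observe that $\lambda(X_{H^t})-H^t$ is bounded below by a \emph{positive} constant $\delta$ on the whole bounded region including the shells (indeed $\ge\kappa$ if the radial interpolation is taken convex, and $\ge\delta>0$ in any case since the shells are compact and the value at both boundary spheres is $\kappa$). This gives $\eta\int[\lambda(X_H)-H]\,dt\ge\delta\eta$ for $\eta>0$ outright, irrespective of how the orbit distributes its time, so you may abandon the traversal-time argument entirely. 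In the same paragraph, ``the flow of $H^t$ on the radial regions preserves $|x|$'' should be restricted to the two geodesic regions (on the Reeb region $H^t$ is time-dependent and $X_{H^t}$ does not preserve $r$, only approximately), and to show $|x_\nu|$ is bounded at all you must also invoke the smallness of the $\RR$-component of $\nabla\Acal$ (otherwise the orbit could live entirely at large $r$ where $r$ is preserved and nothing forces it back).

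There is also a bug in the reduction by contradiction: from $|\eta_\nu|>\nu(\Acal_\nu+1)$ one cannot conclude $|\eta_\nu|\to\infty$ (the right-hand side could be small or negative), and the case $\eta_\nu\to-\infty$ is \emph{not} analogous. If $\eta_\nu\to-\infty$ your own estimate gives $\Acal_\nu\sim\eta_\nu\to-\infty$, so $\Acal_\nu+1<0$ and the hypothesis $|\eta_\nu|>\nu(\Acal_\nu+1)$ is vacuous; no contradiction arises. This points to the fact that a correct argument should produce the two-sided bound $|\eta|\le\tfrac1\varepsilon(|\Acal|+1)$ (as in \cite{AF12,CF09}), from which the paper's one-sided statement follows in the positive-action window where it is used. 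The direct route --- estimate $\Acal\ge\delta\eta-C\|\xi\|_{L^1}$ once $|x|$ is confined, and the analogous upper bound for $\eta<0$ --- avoids these case distinctions and is what the cited proofs do.
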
 
	This is a version with Lagrangian boundary conditions of~\cite[Lemma 4.5]{AF12} and is proved using a by now standard scheme, see~\cite[Proposition 3.1]{CF09}. The $L^\infty$ bound on $\eta$ is then obtained as in~\cite[Corollary 3.3]{CF09}. 
	
\subsubsection*{The Homology}	We define $\RFC^b_{a,*}(\Acal)$ as the quotient chain complex $\RFC^b_*/\RFC^a_*$. By Lemma~\ref{peskydetails}, for $\kappa\geq\kappa_0,$ and $R\geq R_0$ the generators and actions of this chain complex do not depend on the choice of $\kappa,R,c,C,$ and by standard continuation arguments the resulting homology is independent up to canonical isomorphisms of a generic choice of $g,J,m$. Finally, define $\RFC_*^{>0}(\Acal)$ as the inverse direct limit $\lim_{b\nearrow\infty}\lim_{a\searrow 0}\RFC_{a,*}^b(\Acal)$ under the homomorphisms induced by inclusion and denote the resulting homology by $\RFH^{>0}_*(\Acal)$. 


\subsubsection*{Invariance} For any other twisted periodic and positive contact isotopy~$\widetilde\varphi^t$ such that the corresponding functional~$\widetilde\Acal$ is Morse--Bott, we have
$$\RFH^{>0}_*(\Acal)\cong\RFH^{>0}_*(\widetilde\Acal).$$ 
This can be shown like the invariance (29) in the proof of~\cite[Lemma 5.4]{FLS} with the additional explanation after~\cite[Lemma 5.5]{FLS}, by considering the path of Hamiltonians $H_s^t=(1-\beta(s))H^t+\beta(s)\widetilde H^t$, where $\beta(s)$ is a smooth monotone function with $\beta(s)=0$ for $s\leq0$ and $\beta(s)=1$ for $s\geq1$, generating a path of functionals $\Acal_s$ that connects $\Acal$ and $\widetilde\Acal$, where the constants $\kappa, R,c,C$ are chosen uniformly in $s$. Note that $\partial_sH_s^t$ is compactly supported, thus a continuation homomorphism can be defined. Also note that there exists an $\varepsilon>0$ such that for all $s\in\RR$ the action spectrum of $\Acal_{H_s^t}$ and the interval $(0,\varepsilon]$ are disjoint. Using this, we can exclude that critical values cross $0$ during the continuation. The isomorphism follows then by standard arguments. 

In particular for~$h^t\equiv 1$ the corresponding functional~$\Acal_g$ is the functional of the $g$-geodesic flow. Denote by $\HM^{>0}_*(\Ecal)$ the $\ZZ_2$-Morse homology relative the constant loop of the energy functional~$\Ecal(x)=\int_0^1\frac12g(\dot x,\dot x)\;dt$ on the space of based loops in $Q$. The following result is a special case of Merry's theorem~\cite[Theorem 3.16]{M}. 
$$\RFH^{>0}_*(\Acal_g)\cong\HM^{>0}_*(\Ecal).$$
Since $\HM_*^{>0}(\mathcal{E})$ is isomorphic to the homology $H_*(\Omega_q,q;\ZZ_2)$ relative the constant loop, we obtain

\begin{lemma}\label{merry}
	$\RFH^{>0}_*(\Acal_g)\cong H_*(\Omega_q,q;\ZZ_2).$
\end{lemma}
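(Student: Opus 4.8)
The plan is to obtain the isomorphism by concatenating two results that have already been isolated in the discussion above. For the constant contact Hamiltonian $h^t\equiv1$ the Reeb flow on $\Sigma$ is the (reparametrized) cogeodesic flow of $g$, so after the deformation to $H^t$ described earlier the functional $\Acal_g$ is precisely the perturbed Rabinowitz action functional whose positive-action part is computed by Merry. First I would invoke the cited special case of Merry's theorem~\cite[Theorem 3.16]{M} to get $\RFH^{>0}_*(\Acal_g)\cong\HM^{>0}_*(\Ecal)$, where $\HM^{>0}_*(\Ecal)$ denotes the Morse homology of the energy functional $\Ecal(x)=\int_0^1\frac12 g(\dot x,\dot x)\,dt$ on the $W^{1,2}$ based loop space $\Omega_q$, built from the critical points of strictly positive action.

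Second, I would appeal to the classical Morse theory of the based loop space to identify the right-hand side. On $\Omega_q$ with the $W^{1,2}$-topology, $\Ecal$ is smooth, satisfies the Palais--Smale condition, and has as critical points exactly the geodesic loops based at $q$; the constant loop is the unique critical point with critical value $0$, while every nonconstant geodesic loop has energy at least $\frac12\ell_0^2>0$, where $\ell_0$ is the length of the shortest nonconstant geodesic loop at $q$. Since $\Ecal$ need not be Morse -- geodesic loops may come in nonisolated families -- one works in the Morse--Bott framework (or perturbs $\Ecal$ to a nearby Morse function, or uses Milnor's finite-dimensional approximations); in each case the resulting complex computes $H_*(\Omega_q;\ZZ_2)$. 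Because the only critical point of action $\le0$ is the constant loop, which is a nondegenerate minimum and whose sublevel set deformation retracts onto $q$, the subcomplex spanned by the critical points of positive action computes the relative homology $H_*(\Omega_q,q;\ZZ_2)$; that is, $\HM^{>0}_*(\Ecal)\cong H_*(\Omega_q,q;\ZZ_2)$. Composing with the first isomorphism proves the lemma.

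The step that requires genuine care is essentially one of conventions rather than a new argument: one must check that the grading shift $-\frac{n-1}2$ built into $\mu$ is precisely the one making the Robbin--Salamon index of a geodesic chord agree with the Morse index of $\Ecal$ at the corresponding geodesic loop (this was the stated purpose of the shift), and that the truncation ``$>0$'' on both sides corresponds to splitting off the same datum, namely the constant loop. The Morse--Bott degeneracy of $\Ecal$ is dealt with by standard references (for instance the Appendix of~\cite{Fra04}) and is not a real obstacle here; I do not expect any difficulty beyond applying the cited theorems correctly.
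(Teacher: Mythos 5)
Your proposal is correct and follows the same two-step route as the paper: invoke Merry's theorem to identify $\RFH^{>0}_*(\Acal_g)$ with $\HM^{>0}_*(\Ecal)$, then use standard Morse(-Bott) theory of the energy functional on the based loop space to identify the latter with $H_*(\Omega_q,q;\ZZ_2)$. The paper states both steps very briefly, while you correctly flag the minor points of care (Palais--Smale, the Morse--Bott issue, and the grading/truncation conventions); none of these change the argument.
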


\section{Proof of Theorem~\ref{main}}\label{sec:proof}

Recall that Theorem~\ref{BS1} is shown in~\cite{FLS}. In this section we prove Theorem~\ref{main}, using the results that are already established in Theorem~\ref{BS1}. The main step is to show that in this situation the action functional is Morse--Bott. Theorem~\ref{main} then follows exactly as in~\cite{FLS}.

\begin{remark}\label{xivert}
Let $(x,\eta)$ be a critical point of $\Acal$ for $\kappa,R,c,C$ as in Lemma~\ref{peskydetails} and $h^t$ as in Lemma~\ref{specialham}. Then along $x$ we have~$H^t=\widetilde H^t=rh^t-\kappa$, and hence~$\eta\dot H^{\eta t}(x(t))=0$. Since~$\frac d{d t}H^{\eta t}(x(t))=\eta\dot H^{\eta t}(x(t))$ we thus have $H^{\eta t}(x(t))=0$ for all $t$ and $\Acal(x,\eta)=\eta$. In this sense the choice made in Lemma~\ref{specialham} is designed such that the functional that arises from the situation of Theorem~\ref{main} behaves at critical points as in the autonomous case. 
\end{remark}


\begin{lemma}\label{morsebott}
	In the situation of Theorem~\ref{main} and for $h^t$ chosen as in Lemma~\ref{specialham}, the action functional~$\Acal$ defined above is Morse--Bott at the critical sets with positive action, the components of the critical manifold being diffeomorphic to~$S^*_qQ\times \{k\}, k\in\NN$.
\end{lemma}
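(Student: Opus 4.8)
The plan is to verify the two defining properties of a Morse--Bott functional: first, that the critical set $\Crit\Acal$ with positive action is a disjoint union of closed submanifolds of $\Omega_{T^*_qQ}T^*Q\times\RR$, each diffeomorphic to $S^*_qQ\times\{k\}$ for some $k\in\NN$; and second, that at each critical point $(x,\eta)$ the kernel of the Hessian of $\Acal$ coincides with the tangent space to the critical manifold.

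\textbf{Identifying the critical set.} First I would invoke Remark~\ref{xivert}: for $\kappa, R, c, C$ as in Lemma~\ref{peskydetails} and $h^t$ as in Lemma~\ref{specialham}, any critical point $(x,\eta)$ of positive action satisfies $\dot x=\eta X_{\widetilde H^{\eta t}}$, $\abs{x(t)}_g\in[2,\kappa R-1]$, $H^{\eta t}(x(t))=0$ and $\Acal(x,\eta)=\eta$. So $x$ is, up to the fixed radial rescaling relating $\widetilde H^t=rh^t-\kappa$ to the contact flow, the lift of a $\varphi^t$-chord on $\Sigma$ with $x(0),x(1)\in S^*_qQ$. Since $L_0=L_1=S^*_qQ$ and $\varphi^k(L_0)=L_0$ for all $k\in\NN$, the chord closing up after time $\eta$ forces $\eta=k\in\NN$ (the positivity and the hypothesis $L_t\cap L_0=\emptyset$ for $0<t<1$ from Theorem~\ref{main} are exactly what excludes non-integer return times: a chord from $S^*_qQ$ to $S^*_qQ$ must pass through $L_t$ and so can only land at an integer time). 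For each such $k$, the endpoint $x(0)\in S^*_qQ$ may be chosen freely, and the chord is then determined by the flow; this produces the diffeomorphism of the $k$-th component with $S^*_qQ\times\{k\}$. I would here point to the analogous argument in~\cite{FLS}, since the new input is only that $\dot h^t=0$ along $L_t$ makes the time-dependent situation look autonomous at critical points.

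\textbf{Computing the Hessian.} The second and main step is to show nondegeneracy of $\Acal$ transverse to $\Crit\Acal$. I would linearize the critical point equations~(\ref{crit1}) at $(x,\eta)$, obtaining a linearized flow equation for a variation $\xi(t)$ along $x$ together with the linearization of the constraint $H^\eta(x(1))=0$. A kernel element of the Hessian is a Jacobi-type field: a solution of the linearized Hamiltonian equation satisfying the Lagrangian boundary conditions $\xi(0),\xi(1)\in TT^*_qQ$ together with the linearized constraint. The key point is that because $\dot h^t=0$ along $L_t$ (Lemma~\ref{specialham}) and $H^{\eta t}(x(t))\equiv 0$ along the chord (Remark~\ref{xivert}), the terms in the Hessian coming from the explicit time-dependence of $H^{\eta t}$ and from the $\eta t\dot H^{\eta t}$ contribution in the second critical equation vanish at critical points; the Hessian therefore takes the same form as in the autonomous Reeb case treated in~\cite{FLS}. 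One then checks that the space of such Jacobi fields is exactly $(n-1)+1$-dimensional and is spanned by the infinitesimal variations obtained by moving the initial point within $S^*_qQ$ (the $n-1$ directions) together with the direction $\partial_\eta$ tangent to the family — i.e.\ the tangent space to $S^*_qQ\times\{k\}$ — so that $\ker\mathrm{Hess}\,\Acal(x,\eta)=T_{(x,\eta)}\Crit\Acal$.

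\textbf{Main obstacle.} I expect the hard part to be precisely the transverse nondegeneracy computation: one must show that \emph{no} Jacobi field along the chord vanishes at both endpoints other than those coming from the fiber family, and that the constraint direction does not introduce a spurious extra kernel direction. The structural simplification from Lemma~\ref{specialham} (namely $\dot h^t=0$ along $L_t$, hence the action functional behaves at critical points exactly as for an autonomous Reeb flow) is what reduces this to the already-known Morse--Bott computation in~\cite{FLS}; without it the cross terms in the Hessian involving $\dot H^{\eta t}$ would obstruct the argument. So the proof is: reduce to the autonomous model at critical points via Remark~\ref{xivert} and Lemma~\ref{specialham}, then quote the Hessian analysis of~\cite{FLS}, checking that the hypothesis $L_t\cap L_0=\emptyset$ for $0<t<1$ guarantees the components are indexed by $k\in\NN$ and each is a single copy of $S^*_qQ$ rather than a larger family.
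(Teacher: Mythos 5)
Your overall architecture (identify the critical set, then show $\ker\Hcal\Acal=T\Crit\Acal$) is the same as the paper's, and you correctly identify both $\dot h^t=0$ along $L_t$ (Lemma~\ref{specialham}) and $L_t\cap L_0=\emptyset$ as the structural inputs. However, two of your claims are false and they are exactly what makes this lemma nontrivial, so the plan as written would not go through.

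First, you assert that at critical points ``the terms in the Hessian coming from the explicit time-dependence of $H^{\eta t}$ and from the $\eta t\dot H^{\eta t}$ contribution in the second critical equation vanish,'' so the Hessian ``takes the same form as in the autonomous Reeb case'' and the FLS computation can be quoted. This is not true. The condition $\dot h^t=0$ along $L_t$ makes the function $\dot H^{\eta t}=r\dot h^{\eta t}$ vanish along the orbit, but the Hessian kernel equations~(\ref{first0})--(\ref{second0}) contain the first derivatives $X_{\dot H^{\eta t}}$ and $d\dot H^{\eta t}$, which do \emph{not} vanish just because the function does on a curve. What \emph{is} true --- and what the paper's proof actually uses --- is weaker and more geometric: since $\varphi^*\dot h^{\eta t}=0$ on $L_t$, the contact Hamiltonian vector field $X_{\varphi^*\dot h^{\eta t}}$ satisfies $\alpha(X_{\varphi^*\dot h^{\eta t}})=\varphi^*\dot h^{\eta t}=0$, hence lies in the contact structure, hence $X_{\varphi^*\dot H^{\eta t}}\in\ker\lambda$ near $\Crit\Acal$. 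The paper then integrates the translated kernel equation $\dot v=\hat\eta(X_{\varphi^*H^{\eta t}}+\eta tX_{\varphi^*\dot H^{\eta t}})$, uses that the endpoints of $v$ lie in $\ker\lambda$, and plays the $\ker\lambda$-containment of $X_{\varphi^*\dot H^{\eta t}}$ against the positive transversality $X_{\varphi^*H^{\eta t}}\pitchfork_+\ker\lambda$ to force $\hat\eta=0$. This argument is not a reduction to the autonomous case, and it is absent from your proposal.

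Second, your dimension count is wrong: you claim the kernel is $(n-1)+1$-dimensional, spanned by variations within $S^*_qQ$ together with $\partial_\eta$. But the tangent space to $S^*_qQ\times\{k\}$ is $(n-1)$-dimensional and contains no $\partial_\eta$-direction. The whole crux of the Morse--Bott argument is to \emph{exclude} $\partial_\eta$ from the kernel --- that is the content of showing $\hat\eta=0$ --- and if $\partial_\eta$ were in the kernel the functional would fail to be Morse--Bott. Once $\hat\eta=0$, the remaining constraint in~(\ref{second0}) reduces, after integration by parts using the identity $\frac d{dt}dH^{\eta t}_{x_0}(\hat x)=\eta\,d\dot H^{\eta t}_{x_0}(\hat x)$, to $dH^{\eta}(\hat x(1))=0$, cutting down the $n$-dimensional constant solutions $v$ in $TT^*_qQ$ to the correct $(n-1)$-dimensional tangent space.
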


\begin{proof}
A diffeomorphism from the critical manifolds to~$S^*_qQ\times \{k\}$ is given by mapping critical points $(x,k)\in\Omega_qQ\times\RR$ to $(x(1),k)\in T^*_qQ\times\{k\}$. Since $h^t$ is constant $1$ for $t$ near $k\in\NN$ and by the equations~(\ref{crit1}), the image of this map is~$\{(q,p)\in T^*Q\mid\abs p_g=1\}\times\{k\}\cong S^*_qQ\times \{k\}$.

The functional $\Acal$ is Morse--Bott if the kernel of the Hessian $\Hcal\Acal$ is exactly the tangent space of the critical manifold. The inclusion $T\Crit\Acal\subseteq\ker\Hcal\Acal$ is obvious, we will show the converse. A tangent vector to~$x\in\Omega_{q}Q$ is a section $\hat x$ of the pullback bundle $x^*TT^*Q$. Assume that~$(\hat x,\hat\eta)\in\ker\Hcal(\Acal)$. Since~$\hat x\in T\Omega_qQ$, the endpoints of $\hat x$ are in the vertical subbundle,~$\hat x(i)\in T_{x(i)}T^*_qQ\subseteq\ker\lambda,$ for~$i=0,1$. 

We will compute $\Hcal\Acal((\hat x,\hat\eta),(\check x,\check \eta))$ where $(\check x,\check\eta)$ is another vector based at~$(x,\eta)$. Assume for the moment that $x$ lies in a single Darboux chart and that in local coordinates we have $x=(q,p)$ and $\hat x=(\hat q,\hat p)$. As a preparation we compute
\begin{eqnarray*}
	d\left(\int_0^1\lambda(\dot x)\;dt\right)(\hat x)&=&\int_0^1\frac d{d\epsilon}(p+\epsilon\hat p)(\dot q+\epsilon\dot{\hat q})\mid_{\epsilon=0}\;dt\\
	&=&\int_0^1 \hat p\dot q+p\dot{\hat q}\;dt\\
	&=&\int_0^1 \hat p\dot q-\dot p\hat q \;dt +p\hat q\mid_0^1\\
	&=&\int_0^1 \omega (\hat x,\dot x)\;dt,
\end{eqnarray*}
where the last equality holds because the endpoints of $\hat x$ lie in the vertical subbundle and thus $\hat q(i)=0$ for $i=0,1$. If $x$ does not lie in a single chart, the same follows after finitely many coordinate changes. Similarly we compute
$$d\left(\int_0^1\omega(\hat x,\dot x)\;dt\right)(\check x)=\int_0^1 \omega(\hat x,\dot{\check x})\;dt= \int_0^1 \omega(\check x,\dot {\hat x})\;dt+\omega(\hat x,\check x)|_0^1=\int_0^1 \omega(\check x,\dot {\hat x})\;dt,$$
where the last equality holds because the endpoints of $\hat x$ and $\check x$ lie in the vertical subbundle which is a Lagrangian. Using these preparations we can now compute
\begin{eqnarray*}
	\Acal(x,\eta) &=& \int_0^1\lambda(\dot x) - \eta H^{\eta t}(x(t))\;dt,\\
	d\Acal(\hat x,\hat\eta) &=& \int_0^1 \omega(\hat x,\dot x) - \eta dH^{\eta t}(\hat x) -\; \hat \eta \left(H^{\eta t}(x(t)) + \eta t \dot H^{\eta t}(x(t))\right)\;dt, \\ 
	\Hcal \Acal((\hat x,\hat\eta),(\check x,\check\eta)) &=& \int_0^1 \omega(\check x,\dot {\hat x}) - \hat \eta\left( dH^{\eta t}(\check x) + \eta td\dot H^{\eta t}(\check x)\right)\\	
&& -\;\check\eta\left( dH^{\eta t}(\hat x)+\eta t\;d\dot H^{\eta t}(\hat x) +\hat\eta \big(2t\dot H^{\eta t} + \eta t^2 \ddot H^{\eta t} \big)\right)\;dt.
\end{eqnarray*}

Thus $(\hat x,\hat \eta)$ lies in $\ker\Hcal\Acal$ if and only if the following equations are satisfied.
\begin{eqnarray}
	\dot {\hat x} &=& \hat \eta \big(X_{H^{\eta t}} + \eta t X_{\dot H^{\eta t}}\big)\qquad \forall t,\label{first0}\\
	0 &=& \int_0^1 dH^{\eta t}(\hat x) + \eta t d\dot H^{\eta t}(\hat x) + \hat \eta (2t\dot H^{\eta t} + \eta t^2\ddot H^{\eta t})\;dt.\label{second0}
\end{eqnarray}

We translate these equations to the fixed vector space~$T_{x(0)}T^*Q$ by pulling back along $\varphi^{\eta t}$: Define
\begin{eqnarray*}
	v(t) &=& D\varphi^{-1}\hat x(t),
\end{eqnarray*}
where we abbreviated $\varphi^{\eta t}$ to $\varphi$ for better readability. Since $\varphi$ is a symplectomorphism, $D\varphi^{-1}X_{H^{\eta t}}=X_{\varphi^{*}H^{\eta t}}$. Thus equation~(\ref{first0}) becomes 
\begin{eqnarray}
	\dot v &=& \hat \eta (X_{\varphi^{*}H^{\eta t}}+ \eta t X_{\varphi^{*}\dot H^{\eta t}}) \qquad \forall t,\label{first1}
\end{eqnarray}

Integrating equation~(\ref{first1}), we obtain
\begin{eqnarray*}
	v(1)&=&v(0)+\hat\eta \int_0^1 X_{\varphi^{*}H^{\eta t}} +\eta t X_{\varphi^{*}\dot H^{\eta t}}\; dt.
\end{eqnarray*}
Since $H^{\eta t}$ is (after addition of the constant $\kappa$) 1-homogeneous in the fibers near $\Crit\Acal$, the flow $\varphi^{\eta t}$ commutes with dilations by a factor close to $1$. Thus also~$\varphi^*H^{\eta t}$ is (after addition of $\kappa$) 1-homogeneous, thus~$\varphi^*\dot H^{\eta t}$ is 1-homogeneous and so near $\Crit\mathcal A$, $X_{\varphi^*\dot H^{\eta t}}$ is a lift of the contact Hamiltonian vector field $X_{\varphi^*\dot h^{\eta t}}$ on the spherization $S^*Q$. For $h^t$ chosen as in Lemma~\ref{specialham}, we have $\varphi^*\dot h^{\eta t}=0$. Thus~$X_{\varphi^*\dot h^{\eta t}}$ lies in the contact structure,~$X_{\varphi^*\dot h^{\eta t}}\in\ker\lambda|_{S^*Q}$, and thus~$X_{\varphi^* \dot H^{\eta t}}\in\ker\lambda|_{S^*Q}\oplus\langle\partial_r\rangle=\ker\lambda$. By the geometric setup of the theorem,~$D\varphi^{-1}T_{x(1)}T^*_qQ=T_{x(0)}T^*_qQ$, so with $\hat x(i)$ also the endpoints of $v$ lie in the vertical subbundle,~$v(i)\in T_{x(0)}T^*_qQ\subseteq \ker\lambda$. Thus we conclude that~$\hat \eta\int_0^1 X_{\varphi^{*}H^{\eta t}} dt\in \ker\lambda$. But~$X_{\varphi^{*}H^{\eta t}}\pitchfork_+\ker\lambda$ for all $t$ since $h^{\eta t}$ is positive, and thus~$\int_0^1 X_{\varphi^{*}H^{\eta t}} dt\pitchfork_+ \ker\lambda$. We conclude that $\hat\eta=0$ and with~(\ref{first1}) that $v$ is constant.

Recall that our task is to show that~$(\hat x,\hat\eta)=(\hat x,0)\in T\Crit\Acal$, and recall from~(\ref{crit1}) that~$\Crit\Acal=\{x\mid\dot x(t)=\eta X_{H^{\eta t}}\}\times\NN\cap \{x\mid H^{\eta}(x(1))=0\}\times\NN$. We first define the path $(x_s,\eta_s)\in \{x\mid\dot x(t)=\eta X_{H^{\eta t}}\}\times\NN$ by $x_s(t)=\varphi^{\eta t}(x(0)+s v),\eta_s\equiv\eta$. Then $\frac d{ds}(x_s,\eta_s)\mid_{s=0}=(\hat x,0)$. Thus,
\begin{eqnarray*}
(\hat x,0)&\in& T(\{x\mid\dot x(t)=\eta X_{H^{\eta t}}\}\times\NN).
\end{eqnarray*}

Since $\dot x_s=\eta X_{H^{\eta t}}(x_s)$ for all $s$, $\frac d{dt}H^{\eta t}(x_s(t))=\eta \dot H^{\eta t}(x_s(t))$ and thus also $\frac d{dt} dH^{\eta t}_{x_0}(\hat x)=\eta d\dot H^{\eta t}_{x_0}(\hat x)$. Together with $\hat \eta=0$, equation~(\ref{second0}) becomes

\begin{eqnarray*}
0 &=& \int_0^1 dH^{\eta t}(\hat x)+t\frac d{dt}dH^{\eta t}(\hat x)\;dt\\
&\stackrel{\int by\, parts}=& \int_0^1 dH^{\eta t}(\hat x)-dH^{\eta t}(\hat x)\;dt +1\cdot dH^{\eta1}(\hat x(1))-0\cdot dH^{\eta0}(\hat x(0))\\
&=& dH^{\eta}(\hat x(1)).
\end{eqnarray*}

Thus,
\begin{eqnarray*}
(\hat x,0)&\in& T(\{x\mid\dot x(t)=\eta X_{H^{\eta t}}\}\times\NN)\cap T(\{x\mid H^{\eta}(x(1))=0\}\times\NN) =T\Crit\Acal,
\end{eqnarray*}
as claimed. 
\end{proof}

Before we can continue we need two observations about the index. Since the components of $\Crit\Acal$ are spheres $S^*_qQ\times \{k\}$, the Morse function $f$ on $\Crit\Acal$ can be chosen with exactly two critical points $c^-_{k},c^+_{k}$ per component, with Morse index $0$ and $d-1$.

\begin{lemma}\label{muzero}
	The Robbin--Salamon index of $(x(t),k)\in\Crit\Acal$ depends only on $k$ and is equal to $k\mu_0$ for some constant $\mu_0\geq1$.
\end{lemma}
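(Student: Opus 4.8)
The plan is to exploit the twisted periodicity $\varphi^{t}=\varphi^{t-k}\circ\varphi^{k}$ to break the Robbin--Salamon path of a $k$-fold iterate into $k$ copies of the Robbin--Salamon path of a simple chord.

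First I would record the shape of a critical point: by Remark~\ref{xivert} and Lemma~\ref{peskydetails}, a critical point $(x,k)$ of positive action lies in the region where $H^{t}=\widetilde H^{t}=rh^{t}-\kappa$, so $x(t)=\varphi^{kt}(x(0))$ with $x(0),x(1)\in T^{*}_{q}Q\setminus 0$; and since $\varphi^{k}(L_{0})=L_{0}$, the lift $\varphi^{k}$ preserves the fibre $T^{*}_{q}Q\setminus 0$, so $d(\varphi^{k})^{-1}$ carries the vertical Lagrangian at $x(1)$ to the vertical Lagrangian at $x(0)$. Hence the Lagrangian path $\Lambda(t)=d(\varphi^{kt})^{-1}\big(TT^{*}_{x(t)}Q\big)$ starts and ends at the reference $V:=TT^{*}_{x(0)}Q$, and because the Robbin--Salamon index is invariant under homotopies fixing endpoints and reference --- and along the connected component $S^{*}_{q}Q\times\{k\}$, as well as over the connected set $T^{*}_{q}Q\setminus 0$ of possible basepoints of a simple chord (here $\dim Q\geq 2$ enters), the triple $(\Lambda(0),\Lambda(1),V)$ is canonically the vertical subspace --- the index $\mu_{RS}(x,k)$ depends only on $k$ and the Robbin--Salamon index $\mu_{0}$ of a simple chord is well defined.

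Then I would subdivide $[0,1]$ into the intervals $I_{j}=[j/k,(j+1)/k]$. On $I_{j}$, substituting $s=kt-j$ and using $\varphi^{kt}=\varphi^{j+s}=\varphi^{s}\circ\varphi^{j}$, we get $x(t)=\varphi^{s}(y_{j})$ with $y_{j}:=\varphi^{j}(x(0))\in T^{*}_{q}Q\setminus 0$ and
\[
\Lambda(t)\big|_{I_{j}}\;=\;d(\varphi^{j})^{-1}\Big(d(\varphi^{s})^{-1}\big(TT^{*}_{\varphi^{s}(y_{j})}Q\big)\Big);
\]
the bracketed path is exactly the Robbin--Salamon path of the simple chord based at $y_{j}$ with reference $TT^{*}_{y_{j}}Q$, and $d(\varphi^{j})^{-1}$ is a fixed linear symplectomorphism carrying $TT^{*}_{y_{j}}Q$ to $V$. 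Since applying a fixed symplectomorphism to both the path and the reference and reparametrising by an orientation-preserving map leave the Robbin--Salamon index unchanged, $\mu_{RS}(\Lambda|_{I_{j}},V)=\mu_{0}$ for each $j$; by the catenation property of the Robbin--Salamon index (the crossing at each junction $j/k$ is an endpoint crossing for both adjacent subintervals, each contributing half its signature), $\mu_{RS}(x,k)=\sum_{j=0}^{k-1}\mu_{RS}(\Lambda|_{I_{j}},V)=k\mu_{0}$.

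Finally, $\mu_{0}\geq 1$ should come from positivity: since $X_{H^{t}}$ is positively transverse to $\ker\lambda$, the crossing form of the simple-chord path $\Lambda$ is positive semidefinite at every crossing, and at the endpoints $t=0,1$ --- where $\Lambda$ meets the reference in the full $n$-dimensional vertical space --- it is of positive rank, so each endpoint contributes at least $\tfrac12$ while interior crossings contribute non-negatively. I expect this last point to be the main obstacle: one must verify that positivity forces the crossing form at the boundary to be nondegenerate in at least the contact (Reeb) direction, in the spirit of the index estimates for positive paths in~\cite{FLS}. Granting this, $\mu_{RS}(x,k)=k\mu_{0}$ with $\mu_{0}\geq 1$ follows formally, the only delicate bookkeeping being the compatibility of $d(\varphi^{j})^{-1}$ with the reference Lagrangian.
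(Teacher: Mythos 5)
Your first two steps---showing that $\mu_{RS}$ depends only on $k$ by connectedness of the critical manifold, and deducing $\mu_{RS}=k\mu_0$ by cutting $[0,1]$ into $k$ subintervals and applying the concatenation property after transporting each piece back by $d(\varphi^j)^{-1}$---are correct and match the paper's argument, which you have spelled out in somewhat more explicit form. In particular your bookkeeping with the reference Lagrangian (noting that $d(\varphi^j)^{-1}$ carries $TT^*_{y_j}Q$ to $V$ because $\varphi^j$ preserves the fibre $T^*_qQ\setminus 0$) is exactly what justifies the paper's terse ``$k$-fold concatenation'' phrase.

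The third step, $\mu_0\geq 1$, is where your argument diverges from the paper and contains a genuine gap. You propose to show positivity of the crossing form from positive transversality of $X_{H^t}$ to $\ker\lambda$, and you yourself flag this as the main obstacle. It is in fact not just an obstacle but, as far as I can tell, the wrong mechanism: positivity of the contact Hamiltonian $h^t$ controls only the \emph{first-order} datum $\alpha(X_{h^t})=h^t>0$, i.e.\ transversality of the flow to the contact hyperplane, whereas the crossing form of the Lagrangian path $\Lambda(t)=d(\varphi^{t})^{-1}(TT^*_{x(t)}Q)$ at an interior crossing is governed by the linearization $dX_{H^t}$, hence by $dh^t$ and $d^2h^t$, which are unconstrained. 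For the geodesic case $h\equiv 1$ the crossing form is indeed nonnegative (this is the Morse index theorem), but that relies on convexity of the kinetic energy, not on positivity of $h$; an arbitrary positive $h^t$ gives no sign control in the contact directions. So the chain of inequalities you want (interior crossings $\geq 0$, endpoints $\geq\tfrac12$) is not available.

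The paper takes a completely different and indirect route: assume $\mu_0\leq 0$. Then, since the Morse contribution to the grading is bounded, all indices of generators of $\RFC^{>0}_*(\Acal;\ZZ)$ lie below some $k_0$, so $\RFH^{>0}_k(\Acal;\ZZ)\cong H_k(\Omega_qQ,q;\ZZ)=0$ for $k\geq k_0$ (using the $\ZZ$-coefficient isomorphism to loop space homology of \cite{M}, and invariance). This forces $H_k(\Omega_q\widetilde Q;\FF)=0$ in all large degrees for every field $\FF$, which by Serre~\cite{Se51} makes $\widetilde Q$ contractible; a closed manifold of dimension $\geq 2$ with contractible universal cover has infinite fundamental group, contradicting the finiteness of $\pi_1(Q)$ already established in Theorem~\ref{BS1}. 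That global homological argument replaces the local crossing-form analysis you were attempting, and it is the part your proposal does not supply. If you want to salvage the local approach you would need a genuine positivity-of-the-linearization input, which the hypotheses of the lemma do not provide.
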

\begin{proof}
The proof goes exactly as in~\cite[Section 5.2]{FLS} and uses Rabinowitz--Floer homology over $\ZZ$ coefficients, which is developed in~\cite{FLS} to prove Theorem~\ref{BS1}. We repeat the argument without developing the theory over $\ZZ$ coefficients and refer the interested reader to~\cite{FLS}. Note that the change of coefficients changes neither the critical point equation nor the index.

The subset of $\Crit\Acal$ with $\eta=k$ is a sphere and thus connected. Let $(x_0,k),(x_1,k)$ be two critical points of $\Acal$ and $(x_s,k)$ be a path in $\Crit\Acal$ connecting them. Identify the vector spaces $T_{x_s(0)}T^*Q$ in such a way that $TT^*_{x_s(0)}Q$ is constant. Then $d(\varphi^{kt})^{-1}(TT^*_{x_s(t)}Q)$ is a homotopy with parameter $s$ with constant endpoints of paths with parameter $t$ of Lagrangian subspaces. Thus the two paths $d(\varphi^{kt})^{-1}(TT^*_{x_0(t)}Q)$ and $d(\varphi^{kt})^{-1}(TT^*_{x_1(t)}Q)$ are stratum homotopic in the sense of~\cite{RoSa93} and thus $\mu_{\rm RS}(x_0,k)=\mu_{\rm RS}(x_1,k)$. We conclude that the Robbin--Salamon index only depends on $k$. Since every $\varphi^{kt}$ flow line is the $k$-fold concatenation of $\varphi^t$ flow lines, $\mu_{\rm RS}(\varphi^{kt}x(0),k)=k\mu_{\rm RS}(\varphi^tx(0),1)=:k\mu_0$ by the concatenation property of the Robbin--Salamon index.

Assume that $\mu_0\leq0$. Since the signatures of $c_i^\pm$ are $\pm\frac{d-1}2$ (in particular bounded), there exists a $k_0$ such that $\mu(c)< k_0\;\forall c\in\Crit\Acal$. Thus for $k\geq k_0$ we have by deformation of $\Acal$ to a geodesic functional $\Acal_g$, and by the $\ZZ$-version of Lemma~\ref{merry} (also contained in~\cite{M}),  
$$0=\RFH_k^{>0}(\Acal;\ZZ)\cong \RFH_k^{>0}(\Acal_g;\ZZ)\cong H_k(\Omega_q Q,q;\ZZ)\cong \widetilde H_k(\Omega_q Q;\ZZ),$$ 
and thus also $H_k(\Omega_q\widetilde Q;\ZZ)\cong 0$. Thus for all $k\geq k_0+1$ and $\FF=\ZZ_p$ for any prime number $p$ or $\FF=\QQ$ we have $H_*(\Omega_q\widetilde Q,q;\FF)\cong 0$. By~\cite[Proposition 10]{Se51} this implies $H_k(\widetilde Q,\FF)\cong 0$ for all $k\geq 1$ and for all $\FF=\ZZ_p$ or $\QQ$ and thus $\widetilde Q$ is contractible. Since $\dim Q\geq 2$ and $Q$ is closed, we must have $\abs{\pi_1(Q)}=\infty$ which contradicts Theorem~\ref{BS1}.
\end{proof}

With this our main theorem follows exactly as in~\cite{FLS}. We repeat the proof for the convenience of the reader.

\begin{proof}[Proof of Theorem~\ref{main}]
	The chain group $\RFC^{>0}_*(\Acal)$ of the Rabinowitz--Floer homology is generated by the critical points $c^{\pm}_k,\; k\geq 1$, where 
	$$\mu(c^{-}_k)=k\mu_0 - (d-1),\quad\mu(c^{+}_k)=k\mu_0.$$
	By Lemma~\ref{muzero}, $\mu_0\geq 1$. Hence there is one critical point of index zero if $\mu_0$ is a divisor of $(d-1)$ and no critical point of index zero otherwise. Hence after a deformation to the functional $\Acal_g$ of a geodesic flow, with Lemma~\ref{merry} and the reduced long exact $\ZZ_2$-homology sequence of the pair $(\Omega_qQ,q)$ we find that 
	$$\RFH_0^{>0}(\Acal)\cong \RFH_0^{>0}(\Acal_g)\cong H_0(\Omega_q Q,q;\ZZ_2)\cong \widetilde H_0(\Omega_q Q;\ZZ_2)$$
	is $0$ or $\ZZ_2$, thus $\pi_1(Q)$ is $0$ or $\ZZ_2$. In the first case we are done, so assume the second case.	By Theorem~\ref{BS1}, $Q$ is a closed manifold such that $H^*(\widetilde Q;\ZZ_2)$ is generated by one element. Then by~\cite[Corollary 3.8]{FraSch06}, $Q$ is either homotopy equivalent to $\RR P^d$, or $\widetilde Q$ is homotopy equivalent to $\CC P^{2n+1}$. In the former case we are done, so assume the latter. 
	
	We denote $\dim(\CC P^{2n+1})=2(2n+1)=d$. Assume first that $\mu_0\geq 2$, then $\mu(c_1^-)=\mu_0-d+1\leq \mu(c)-2$ for all other critical points $c$. This means that $c_1^-$ is the lowest index generator of $\RFH^{>0}_*(\Acal)$. The lowest index non-vanishing group is $\RFH^{>0}_0(\Acal)\cong \widetilde H_0(\Omega_qQ;\ZZ_2)\cong\ZZ_2$ and thus $\mu(c_1^-)=0$ and $\mu_0=d-1$. Recall that 
	\begin{equation*}
		H_*(\CC P^{2n+1};\ZZ_2)=
			\begin{cases}
				\ZZ_2 \quad\mbox{if }*=kd\mbox{ or }kd+1\mbox{ for }k\in\NN_0,\\
				0 \quad\mbox{otherwise.}
			\end{cases}		
	\end{equation*}
Since $\Omega_qQ$ is homotopy equivalent to the disjoint union of two copies of $\Omega_q\CC P^{2n+1}$, 
$$H_*(\Omega_qQ;\ZZ_2)\cong H_*(\Omega_q\CC P^{2n+1})\oplus H_*(\Omega_q\CC P^{2n+1}).$$
In particular $\ZZ_2\oplus\ZZ_2\cong H_{2d}(\Omega_qQ;\ZZ_2)\cong \widetilde H_{2d}(\Omega_qQ;\ZZ_2)\cong\RFH_{2d}^{>0}(\Acal)$, which is only possible if we have two generators of index $2d$. The $c^+_k$ have pairwise different indices and so do the $c^-_k$, thus there must be $k^+$ and $k^-$ such that $\mu(c^+_{k^+})=k^+\mu_0-d+1=\mu(c^-_{k^-})=k^-\mu_0=2d$. But since $\mu_0=d-1$ and $d\geq6$, this is impossible.

	The remaining case is $\mu_0=1$. Since $d\geq 6$, the critical points with negative index are exactly
$$\mu(c^-_1)=-d+2,\:\mu(c^-_2)=-d+3,\:\ldots,\:\mu(c^-_{d-2})=-1.$$
If the chord underlying $c^-_1$ were contractible in $\Omega_{T^*_qQ}T^*Q$, then all chords underlying $c^\pm_k$ would be contractible since they all are concatenations of chords homotopic to the chord underlying $c^-_1$. This contradicts the fact that they also generate the $\ZZ_2$-homology of the connected component of noncontractible chords in $\Omega_qQ$. Thus the chord underlying $c_1^-$ must be noncontractible. Since $\pi_1(Q)=\ZZ_2$ and since the chord underlying $c_2^-$ is the concatenation of two chords homotopic to the chord underlying $c_1^-$, the chord underlying $c_2^-$ is contractible and in particular not homotopic to the chord underlying $c_1^-$. The boundary operator is defined by flow lines with cascades with underlying Floer strips and paths in $\Crit\Acal$, thus every chord underlying a critical point is homotopic to the chords underlying the summands of its boundary. Thus $c_1^-$ cannot contribute to the boundary of $c_2^-$. Since all other critical points have higher index, we conclude that $c_1^-$ is not a boundary. Since $c_1^-$ is in the lowest degree chain group, it is closed and hence represents a non-trivial homology class. Thus $\RFH_{-d+2}^{>0}(\Acal)\cong \widetilde H_{-d+2}(\Omega_qQ;\ZZ_2)$ does not vanish, which is impossible since $-d+2<0$.
\end{proof}



\begin{thebibliography}{xxxx}
\bibitem{AF12} 
P.\ Albers and U.\ Frauenfelder. 
A variational approach to Givental's nonlinear Maslov index. 
{\it Geom. Funct. Anal.} {\bf 22} (2012), no.~5. 1033--1050.

\bibitem{Be78} 
A. Besse. 
Manifolds all of whose Geodesics are Closed. 
{\it Ergebnisse der Mathematik und ihrer Grenzgebiete}  {\bf 93} (1978), Springer-Verlag.

\bibitem{Bo54} 
R. Bott. 
On Manifolds all of whose Geodesics are Closed. 
{\it Ann. Math.}  {\bf 60} (1954), no.~3. 375--382.

\bibitem{CF09}
K.\ Cieliebak, U.\ Frauenfelder.
A Floer Homology for exact contact Embeddings.
{\it Pacific Journal of Mathematics} {\bf 293} (2009) no.~2, 251--316. 

\bibitem{CFO}
K.\ Cieliebak, U.\ Frauenfelder, A.\ Oancea.
Rabinowitz--Floer Homology and Symplectic Homology.
{\it Ann. sc. de l'ENS} {\bf 43} (2010), no.~4, 6, 957--1015

\bibitem{FLS} 
U.\ Frauenfelder, C.\ Labrousse and F. Schlenk. 
Slow volume growth for Reeb flows on spherizations and contact Bott--Samelson theorems.
 {\it J.\ Topol.\ Anal.}~{\bf 07} (2015), no 3, 407--451.

\bibitem{FraSch06}
U.\ Frauenfelder and F.\ Schlenk.
Fiberwise volume growth via Lagrangian intersections. 
{\it J.~Symplectic Geom.} ~{\bf 4} (2006), 117--148.

\bibitem{Fra04}
U.\ Frauenfelder.
The Arnold-Givental conjecture and moment Floer homology.    
{\it Int.\ Math.\ Res.\ Not.}~{\bf 42} (2004), 2179--2269.

\bibitem{Ge08}
H.\ Geiges.
An introduction to contact topology.
{\it Cambridge University Press} (2008).
%
\bibitem{M} 
W.\ Merry. 
Lagrangian Rabinowitz Floer homology and twisted cotangent bundles. 
{\it Geom.\ Dedicata} {\bf 171} (2014), 345--386.

\bibitem{RoSa93} 
J. Robbin and D. Salamon.
The Maslov index for paths. 
{\it Topology} {\bf 32} (1993), 827--844.

\bibitem{Sa63} 
H. Samelson.
On manifolds with many closed geodesics. 
{\it Port. Math.} {\bf 22} (1963), no. 4 193--196.

\bibitem{Se51}
J.-P.\ Serre. 
Homologie singuli\`ere des espaces fibr\'es. 
{\it Ann. of Math.} {\bf 54} (1951), 425--505.

\end{thebibliography}
\end{document}